\newcommand{\Bis}{\Gamma}
\newcommand{\ann}{\mathop{\mathrm{Ann}}}
\newcommand{\skel}[1]{^{(#1)}}
\newcommand{\dom}{\mathop{\boldsymbol d}\nolimits}
\newcommand{\ran}{\mathop{\boldsymbol r}\nolimits}
\renewcommand{\to}{\longrightarrow}
\newcommand{\supp}{\mathop{\mathrm{supp}}}
\newcommand{\modu}[1]{#1\text{-}\mathrm{mod}}
\newcommand{\Ind}{\mathop{\mathrm{Ind}}\nolimits}
\newcommand{\inv}{^{-1}}
\newtheorem{Thm}{Theorem}
\newtheorem{Prop}[Thm]{Proposition}
\theoremstyle{definition}
\theoremstyle{remark}
\newtheorem{Rmk}[Thm]{Remark}}
\newtheorem{Cor}[Thm]{Corollary}
{\theoremstyle{remark}
}
{\theoremstyle{remark}
}
\theoremstyle{remark}
\theoremstyle{remark}
\theoremstyle{remark}
\title[Ideals of \'etale groupoid algebras]{Ideals of \'etale groupoid algebras and Exel's Effros-Hahn conjecture}
\author{Benjamin Steinberg}
\address[B.~Steinberg]{%
    Department of Mathematics\\
    City College of New York\\
    Convent Avenue at 138th Street\\
    New York, New York 10031\\
    USA}
\email{bsteinberg@ccny.cuny.edu}
\thanks{This work was supported in part by a PSC-CUNY grant and by the Fulbright Commission, which supported the author's visit to Brazil where much of this work was done.  He thanks the Universidade Federal de Santa Catarina for its hospitality}
\date{\today}
\keywords{\'etale groupoids, groupoid algebras, Effros-Hahn conjecture}
\subjclass[2010]{20M18, 20M25, 16S99,16S36, 22A22, 18F20}
\begin{document}

\begin{abstract}
We extend to arbitrary commutative base rings a recent result of Demeneghi that every ideal of an ample groupoid algebra over a field is an intersection of kernels of induced representations from isotropy groups, with a much shorter proof, by using the author's Disintegration Theorem for groupoid representations.  We also prove that every primitive ideal is the kernel of an induced representation from an isotropy group; however,  we are unable to show, in general, that it is the kernel of an irreducible induced representation.  If each isotropy group is finite (e.g., if the groupoid is principal) and if the base ring is Artinian (e.g., a field), then we can show that every primitive ideal is the kernel of an irreducible representation induced from isotropy. 
\end{abstract}

\maketitle

\section{Introduction}
The original Effros-Hahn conjecture~\cite{EffHahn,EffHahnBull} suggested that every primitive ideal of a crossed product of an amenable locally compact group with a commutative $C^*$-algebra should be induced from a primitive ideal of an isotropy group.  The result was proved by Sauvageot~\cite{Sauvage} for discrete groups and a more general result than the original conjecture was proved by Gootman and Rosenberg in~\cite{Effhahnpf}.  Crossed products of the above form are special cases of groupoid $C^*$-algebras and analogues of the Effros-Hahn conjecture in the groupoid setting were achieved by Renault~\cite{RenaultEH} and Ionescu and Williams~\cite{IonescuWilliams}.

In~\cite{mygroupoidalgebra}, the author initiated the study of convolution algebras of ample groupoids over commutative rings with unit; see also~\cite{operatorguys1}.  R.~Exel conjectured at the PARS meeting in Gramado, 2014 (and perhaps earlier) that an analogue of the Effros-Hahn conjecture should hold in this context.  The author had developed in~\cite{mygroupoidalgebra} a theory of induction from isotropy groups in this setting and had proven that inducing an irreducible representation from an isotropy group results in an irreducible representation of the groupoid algebra.

In~\cite{ExelDoku}, Dokuchaev and Exel showed that if a discrete group  $G$ acts partially on a locally compact and totally disconnected space $X$, then every ideal of the partial crossed product $C_c(X,\Bbbk)\rtimes G$, where $C_c(X,\Bbbk)$ is the ring of locally constant, compactly supported functions from $X$ to the field $\Bbbk$, is an intersection of ideals induced from isotropy.  Note that such partial crossed products are ample groupoid convolution algebras.  Since in a $C^*$-algebra, every closed ideal is an intersection of primitive ideals, this result can be viewed as an analogue of Effros-Hahn for partial crossed products.

Demeneghi~\cite{demeneghi} extended the result of Dokuchaev and Exel to arbitrary ample groupoid algebras over a field.  Namely, he showed that each ideal is an intersection of kernels of induced representations from isotropy subgroups.  His proof is rather indirect.  First he develops a theory of induced representations for crossed products of the form $C_c(X,\Bbbk)\rtimes S$ where $S$ is an inverse semigroup acting on a locally compact and totally disconnected space $X$.  Then he proves the result for such crossed products.  Finally, he proves that groupoid convolution algebras are such crossed products using the full strength of his theory (and the converse is essentially true as well) and he shows that crossed product induction corresponds to groupoid induction under the isomorphism.  His paper is around 50 pages in all.

In this paper, we prove that over an arbitrary base commutative ring each ideal of an ample groupoid convolution algebra is an intersection of kernels of induced representations from isotropy groups. Moreover, our proof is direct --- circumventing entirely the crossed product machinery --- and short.  It relies on the author's Disintegration Theorem~\cite{groupoidbundles}, which shows that modules for ample groupoid convolution algebras come from sheaves on the groupoid.  This machinery is not very cumbersome to develop and is quite useful for analyzing irreducible representations, as was done in~\cite{groupoidprimitive}.  In future work, it will be shown that the Disintegration Theorem can be used to establish the isomorphism between inverse semigroup crossed products and groupoid algebras directly, without using induced representations.

We also obtain some new progress on Exel's original conjecture on the structure of primitive ideals for groupoid algebras.  Namely, we show that every primitive ideal is the kernel of a single representation induced from an isotropy group (rather than an infinite intersection of such kernels).  We are, unfortunately, not able to show in general that it is the kernel of an irreducible representation induced from an isotropy group.  We are, however, able to prove Exel's version of the Effros-Hahn conjecture on primitive ideals if the base ring $R$ is Artinian and each isotropy group is either finite, or locally finite abelian with orders of elements invertible in $R/J(R)$ where $J(R)$ is the Jacobson radical of $R$ (e.g., if $R$ has characteristic zero).

\section{Preliminaries}
This section summarizes definitions and results from~\cite{mygroupoidalgebra} and~\cite{groupoidbundles} that we use throughout.  There are no new results in this section.

\subsection{Groupoids}
Following Bourbaki, compactness will include the Hausdorff axiom throughout this paper.  However, we do not require locally compact spaces to be Hausdorff. A topological groupoid $\mathscr G=(\mathscr G\skel 0,\mathscr G\skel 1)$ is \emph{\'etale} if its domain map $\dom$ (or, equivalently, its range map $\ran$) is a local homeomorphism.  In this case, identifying objects with identity arrows, we have that $\mathscr G\skel 0$ is an open subspace of $\mathscr G\skel 1$ and the multiplication map is a local homeomorphism.  See, for example,~\cite{Paterson,resendeetale,Exel}.

Following~\cite{Paterson}, an \'etale groupoid is called \emph{ample} if its unit space $\mathscr G\skel 0$ is locally compact Hausdorff with a basis of compact open subsets. We shall say that an ample groupoid $\mathscr G$ is Hausdorff if $\mathscr G\skel 1$ is Hausdorff.

A \emph{local bisection} of an \'etale groupoid $\mathscr G$ is an open subset $U\subseteq \mathscr G\skel 1$ such that both $\dom|_U$ and $\ran|_U$ are homeomorphisms with their images.  The local bisections form a basis for the topology on $\mathscr G\skel 1$~\cite{Exel}. The set $\Bis(\mathscr G)$ of local bisections is an inverse monoid (cf.~\cite{Lawson}) under the binary operation \[UV = \{\gamma\eta\mid \gamma\in U,\eta\in V,\ \dom (\gamma)=\ran (\eta)\}.\] The semigroup inverse is given by $U\inv = \{\gamma\inv\mid \gamma\in U\}$.  The set $\Bis_c(\mathscr G)$ of compact local bisections  is an inverse subsemigroup of $\Bis(\mathscr G)$~\cite{Paterson}. Note that $\mathscr G$ is ample if and only if $\Bis_c(\mathscr G)$ is a basis for the topology on $\mathscr G\skel 1$~\cite{Exel,Paterson}.

If $u\in \mathscr G\skel 0$, then the \emph{orbit} $\mathcal O_u$ of $u$ consists of all $v\in \mathscr G\skel 0$ such that there is an arrow $\gamma$ with $\dom(\gamma)=u$ and $\ran(\gamma)=v$.  The orbits form a partition of $\mathscr G\skel 0$.
A subset $X\subseteq \mathscr G\skel 0$ is \emph{invariant} if it is a union of orbits.

If $u\in \mathscr G\skel 0$, the \emph{isotropy group} of $\mathscr G$ at $u$ is
\[G_u=\{\gamma\in \mathscr G\skel 1\mid \dom(\gamma)=u=\ran(\gamma)\}.\]
Isotropy groups of elements in the same orbit are isomorphic.

\subsection{Ample groupoid algebras}
Fix a commutative ring with unit $R$.  The author~\cite{mygroupoidalgebra} associated an $R$-algebra $R\mathscr G$ to each ample groupoid $\mathscr G$ as follows.  We define $R\mathscr G$ to be the $R$-span in $R^{\mathscr G\skel 1}$ of the characteristic functions $1_U$ of compact open subsets $U$ of $\mathscr G\skel 1$.  It is shown in~\cite[Proposition~4.3]{mygroupoidalgebra} that $R\mathscr G$ is spanned by the elements $1_U$ with $U\in \Bis_c(\mathscr G)$.  If $\mathscr G\skel 1$ is Hausdorff, then $R\mathscr G$ consists of the locally constant $R$-valued functions on $\mathscr G\skel 1$ with compact support.  Convolution is defined on $R\mathscr G$ by \[f\ast g(\gamma)=\sum_{\dom(\eta)=\dom(\gamma)}f(\gamma \eta\inv)g(\eta)=\sum_{\alpha\beta=\gamma}f(\alpha)g(\beta).\]  The finiteness of the sums is proved in~\cite{mygroupoidalgebra}. The fact that the convolution belongs to $R\mathscr G$ comes from the computation $1_U\ast 1_V=1_{UV}$ for $U,V\in \Bis_c(\mathscr G)$~\cite{mygroupoidalgebra}.

The algebra $R\mathscr G$ is unital if and only if $\mathscr G\skel 0$ is compact, but it always has local units (i.e., is a directed union of unital subrings)~\cite{mygroupoidalgebra,groupoidbundles}.  A module $M$ over a ring $S$ with local units is termed \emph{unitary} if $SM=M$.  A module $M$ over a unital ring is unitary if and only if $1m=m$ for all $m\in M$.  The category of unitary $S$-modules is denoted $\modu{S}$.  Notice that every simple module is unitary; $M$ is simple if $SM\neq 0$ and $M$ has no proper, non-zero submodules.

\subsection{Induced modules}
We recall from~\cite{mygroupoidalgebra} the induction functor
\[\Ind_u\colon \modu{RG_u}\to \modu{R\mathscr G}.\]  For $u\in \mathcal G\skel 0$, let $\mathscr Gu = \dom\inv(u)$ denote the set of all arrows starting at $u$.  Then $G_u$ acts freely on the right of $\mathscr Gu$ by multiplication.  Hence $R\mathscr Gu$ is a free right $RG_u$-module.  A basis can be obtained by choosing, for each $v\in \mathcal O_u$, an arrow $\gamma_v\colon u\to v$.  We normally choose $\gamma_u=u$.  There is a left $R\mathscr G$-module structure on $R\mathscr Gu$ given by
\begin{equation}\label{eq:action}
f\alpha = \sum_{\dom(\gamma)=\ran(\alpha)}f(\gamma)\gamma\alpha
\end{equation}
for $\alpha\in \mathscr Gu$ and $f\in R\mathscr G$.  The $R\mathscr G$-action commutes with the $RG_u$-action by associativity and so $R\mathscr Gu$ is an $R\mathscr G$-$RG_u$-bimodule, unitary under both actions (as is easily checked).

 The functor $\Ind_u$ is defined by
\[\Ind_u(M) = R\mathscr Gu\otimes_{RG_u}M.\]
This functor is exact by freeness of $R\mathscr Gu$ as a right $RG_u$-module and there is, in fact, an $R$-module direct sum decomposition
\[\Ind_u(M) =\bigoplus_{v\in \mathcal O_u} \gamma_v\otimes M.\]  The action of $f\in R\mathscr G$ in these coordinates is given by
\begin{equation}\label{eq:action.ind}
f(\gamma_v\otimes m) =\sum_{w\in \mathcal O_u}\gamma_w\otimes \sum_{\gamma\colon v\to w}f(\gamma)(\gamma_w\inv \gamma\gamma_v)m,
\end{equation}
as is easily checked.  An immediate corollary of \eqref{eq:action.ind} is the following (see also~\cite{demeneghi}).

\begin{Prop}\label{p:annihilator.induced}
Let $u\in \mathscr G\skel 0$ and $M$  an $RG_u$-module.  Fix $\gamma_u\colon u\to v$ for all $v\in \mathcal O_u$. Then the equality
\[\ann(\Ind_u(M)) = \left\{f\in R\mathscr G\mid \forall v,w\in \mathcal O_u, \sum_{\gamma\colon v\to w} f(\gamma)(\gamma_w\inv \gamma\gamma_v)\in \ann(M)\right\}\] holds.
\end{Prop}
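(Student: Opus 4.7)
The plan is to deduce the equality essentially as a bookkeeping exercise from formula \eqref{eq:action.ind} together with the $R$-module direct sum decomposition $\Ind_u(M)=\bigoplus_{v\in \mathcal O_u}\gamma_v\otimes M$. Concretely, I will unpack when $f$ kills each generator $\gamma_v\otimes m$ and read off the condition component by component in the decomposition.

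First I would record the structural observation that since $R\mathscr Gu$ is free as a right $RG_u$-module with basis $\{\gamma_v\mid v\in\mathcal O_u\}$, the map $m\mapsto \gamma_v\otimes m$ is an $R$-module isomorphism from $M$ onto the summand $\gamma_v\otimes M$. Consequently, an element $\sum_{w\in\mathcal O_u}\gamma_w\otimes m_w$ of $\Ind_u(M)$ is zero if and only if each $m_w=0$ in $M$. Next I would observe that the elements $\gamma_v\otimes m$, for $v\in\mathcal O_u$ and $m\in M$, $R$-linearly span $\Ind_u(M)$, so $f\in\ann(\Ind_u(M))$ if and only if $f(\gamma_v\otimes m)=0$ for every such $v$ and $m$.

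Now I would apply \eqref{eq:action.ind}, which gives $f(\gamma_v\otimes m)=\sum_{w\in\mathcal O_u}\gamma_w\otimes \left(\sum_{\gamma\colon v\to w}f(\gamma)(\gamma_w\inv\gamma\gamma_v)\right)m$. Here one should note that each $\gamma_w\inv\gamma\gamma_v$ lies in $G_u$ whenever $\gamma\colon v\to w$ (because $\gamma_v\colon u\to v$ and $\gamma_w\colon u\to w$), so the inner sum is a bona fide element of $RG_u$ acting on $M$. By the observation of the previous paragraph, the displayed expression is zero exactly when $\left(\sum_{\gamma\colon v\to w}f(\gamma)(\gamma_w\inv\gamma\gamma_v)\right)m=0$ in $M$ for every $w\in\mathcal O_u$. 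Quantifying over all $m\in M$ yields the condition $\sum_{\gamma\colon v\to w}f(\gamma)(\gamma_w\inv\gamma\gamma_v)\in\ann(M)$, and then quantifying over all $v\in\mathcal O_u$ as well produces exactly the right-hand side of the claimed equality.

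There is really no substantive obstacle here; the only point that needs a moment of care is using freeness of $R\mathscr Gu$ over $RG_u$ to justify that a sum lying in different summands of $\bigoplus_{w\in\mathcal O_u}\gamma_w\otimes M$ vanishes if and only if each coefficient in $M$ vanishes, so that the annihilation condition can be checked at each orbit component separately.
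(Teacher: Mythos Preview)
Your proposal is correct and is exactly the unpacking the paper has in mind: the paper does not give a separate proof but declares the proposition an immediate corollary of \eqref{eq:action.ind} together with the direct sum decomposition $\Ind_u(M)=\bigoplus_{v\in\mathcal O_u}\gamma_v\otimes M$, and your argument simply spells this out. The one point you flag as needing care---that freeness of $R\mathscr Gu$ over $RG_u$ makes vanishing in the direct sum equivalent to vanishing of each $M$-coefficient---is indeed the only nontrivial ingredient, and you handle it correctly.
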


A crucial result is that induction preserves simplicity.

\begin{Thm}[Steinberg, Prop.~7.19, Prop.~7.20 of~\cite{mygroupoidalgebra}]\label{t:induced.simple}
Let $M$ be a simple $RG_u$-module with $u\in \mathscr G\skel 0$.  Then $\Ind_u(M)$ is a simple $R\mathscr G$-module.  Moreover, the functor $\Ind_u$ reflects isomorphism and $\Ind_u(M)\cong \Ind_v(N)$ implies $\mathcal O_u=\mathcal O_v$.
\end{Thm}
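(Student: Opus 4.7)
For the simplicity assertion I would argue directly from the decomposition $\Ind_u(M)=\bigoplus_{v\in\mathcal O_u}\gamma_v\otimes M$ together with \eqref{eq:action.ind}. Given a nonzero submodule $N\subseteq\Ind_u(M)$, pick $0\neq x=\sum_{i=1}^n\gamma_{v_i}\otimes m_i\in N$ with the $v_i$ distinct and all $m_i\neq 0$. Using Hausdorffness of $\mathscr G\skel 0$ and its basis of compact opens, choose a compact open $V\ni v_1$ disjoint from $\{v_2,\ldots,v_n\}$; then \eqref{eq:action.ind} applied to $f=1_V$ yields $1_V\cdot x=\gamma_{v_1}\otimes m_1\in N$. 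Choosing a compact local bisection $U\ni\gamma_{v_1}\inv$, a second application of \eqref{eq:action.ind} (noting that $U$ is a bisection, so each fibre of $\dom|_U$ is at most a singleton) gives $1_U\cdot(\gamma_{v_1}\otimes m_1)=u\otimes m_1\in N$. For each $g\in G_u$, a compact bisection $U_g\ni g$ gives $1_{U_g}\cdot(u\otimes m_1)=u\otimes gm_1\in N$; since $M$ is simple, $u\otimes M\subseteq N$. A final transport by a compact bisection around $\gamma_w$, for each $w\in\mathcal O_u$, carries $u\otimes M$ onto $\gamma_w\otimes M$, so $N=\Ind_u(M)$.

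For the remaining two assertions I would use the right adjoint $\Res_v:=\Hom_{R\mathscr G}(R\mathscr Gv,-)$ to $\Ind_v$. The left $R\mathscr G$-module $R\mathscr Gv$ is cyclic, generated by $v$ (since $fv$ equals the restriction of $f$ to $\mathscr Gv$ by \eqref{eq:action}), hence there is a natural identification $\Res_v(X)\cong\{x\in X\mid I_v\cdot x=0\}$, where $I_v=\ann_{R\mathscr G}(v)=\{f\in R\mathscr G\mid f|_{\mathscr Gv}=0\}$. The right $RG_v$-action on $R\mathscr Gv$ equips $\Res_v(X)$ with a left $RG_v$-action given concretely by $g\cdot x=1_U\cdot x$ for any compact bisection $U\ni g$ (well-defined since $1_U\cdot v=g=1_{U'}\cdot v$ implies $1_U-1_{U'}\in I_v$). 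A separation argument using characteristic functions of compact opens in $\mathscr G\skel 0$ then gives
\[\Res_v(\Ind_u(M))=\begin{cases}\gamma_v\otimes M&\text{if }v\in\mathcal O_u,\\ 0&\text{if }v\notin\mathcal O_u,\end{cases}\]
and the unit $m\mapsto u\otimes m$ realizes an $RG_u$-module isomorphism $M\cong\Res_u(\Ind_u(M))$ via the identity $1_{U_g}\cdot(u\otimes m)=u\otimes gm$ from the first paragraph. It follows that an isomorphism $\Ind_u(M)\cong\Ind_u(N)$ descends under $\Res_u$ to $M\cong N$, giving reflection of isomorphism; and an isomorphism $\Ind_u(M)\cong\Ind_v(N)$ with $N$ simple (hence nonzero) descends under $\Res_v$ to $\Res_v(\Ind_u(M))\cong N\neq 0$, forcing $v\in\mathcal O_u$ and therefore $\mathcal O_u=\mathcal O_v$.

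The main obstacle is the computation of $\Res_v(\Ind_u(M))$: one must rule out the possibility that an element $x=\sum_i\gamma_{w_i}\otimes m_i$ with some $w_i\neq v$ could be annihilated by all of $I_v$. This is precisely where Hausdorffness of the unit space together with the ample basis of compact opens become essential, since they guarantee, for any $w_i\neq v$, a characteristic function $1_V\in I_v$ isolating $w_i$ from both $v$ and the other $w_j$'s, and therefore forcing all $w_i$ to coincide with $v$.
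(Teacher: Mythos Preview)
The paper does not actually prove Theorem~\ref{t:induced.simple}; it is quoted from~\cite{mygroupoidalgebra} (Propositions~7.19 and~7.20 there) without reproduction of the argument. So there is no in-paper proof to compare your proposal against.

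That said, your argument is correct and self-contained. The simplicity part is the standard one: isolate a single summand $\gamma_{v_1}\otimes m_1$ using $1_V$ for a compact open $V$ separating $v_1$ from the other $v_i$ (this is exactly where Hausdorffness of $\mathscr G\skel 0$ is used), transport it to $u\otimes m_1$ with a compact bisection through $\gamma_{v_1}\inv$, use simplicity of $M$ to get $u\otimes M$, and transport back to every $\gamma_w\otimes M$. Each step is justified by~\eqref{eq:action.ind} together with the fact that a bisection meets any fibre $\dom\inv(v)$ in at most one point.

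For the remaining assertions, your use of $\Res_v=\Hom_{R\mathscr G}(R\mathscr Gv,-)$ is clean and correct. The cyclicity of $R\mathscr Gv$ over $R\mathscr G$ with generator $v$ follows from~\eqref{eq:action} and the bisection observation just mentioned, so the identification $\Res_v(X)\cong\{x\in X\mid I_v x=0\}$ is valid; and your separation argument correctly pins down $\Res_v(\Ind_u(M))$ as $\gamma_v\otimes M$ or $0$ according to whether $v\in\mathcal O_u$. One small point: in the orbit claim you assume $N$ is simple, whereas the statement only hypothesizes $M$ simple. This is harmless --- your argument only needs $N\neq 0$, which is forced since $\Ind_u(M)\neq 0$ when $M$ is simple --- but you should phrase it that way.

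In short: your plan would yield a complete proof, and the strategy (isolate--transport for simplicity, adjoint restriction for the rest) coincides with the one in~\cite{mygroupoidalgebra}, even though the present paper omits the details.
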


The  Effros-Hahn conjecture for ample groupoids, first stated to the best of our knowledge by Exel at the PARS2014 conference in Gramado (see also~\cite{ExelDoku}), says that each primitive ideal of $R\mathscr G$ is of the form $\ann(\Ind_u(M))$ for some $u\in \mathscr G\skel 0$ and simple $RG_u$-module $M$.

\subsection{Modules over ample groupoid algebras}
The key ingredient to our approach is the author's analogue~\cite{groupoidbundles} of Renault's Disintegration Theorem~\cite{renaultdisintegration} for modules over groupoid algebras. Here $R$ will be a fixed commutative ring with unit and $\mathscr G$ an ample groupoid (not necessarily Hausdorff).

A \emph{$\mathscr G$-sheaf} $\mathcal E=(E,p)$ consists of a space $E$, a local homeomorphism $p\colon E\to \mathscr G\skel 0$ and an action map $\mathscr G\skel 1\times_{\mathscr G\skel 0} E\to E$ (where the fiber product is with respect to $\dom$ and $p$), denoted $(\gamma,e)\mapsto \gamma e$, satisfying the following axioms:
\begin{itemize}
\item $p(e)e=e$ for all $e\in E$;
\item $p(\gamma e)=\ran(\gamma)$ whenever $p(e)=\dom(\gamma)$;
\item $\gamma(\eta e)=(\gamma\eta)e$ whenever $p(e)=\dom(\eta)$ and $\dom(\gamma)=\ran(\eta)$.
\end{itemize}

A \emph{$\mathscr G$-sheaf of $R$-modules} is a $\mathscr G$-sheaf $\mathcal E=(E,p)$ together with an $R$-module structure on each stalk $E_u=p\inv(u)$ such that:
\begin{itemize}
\item the zero section, $u\mapsto 0_u$ (the zero of $E_u$), is continuous;
\item addition $E\times_{\mathscr G\skel 0} E\to E$ is continuous;
\item scalar multiplication $R\times E\to E$ is continuous;
\item for each $\gamma\in \mathscr G\skel 1$, the map $E_{\dom(\gamma)}\to E_{\ran(\gamma)}$ given by $e\mapsto \gamma e$ is $R$-linear;
\end{itemize}
where $R$ has the discrete topology in the third item.
Note that the first three conditions are equivalent to $(E,p)$ being a sheaf of $R$-modules over $\mathscr G\skel 0$.
Crucial to this paper is that $E_u$ is an $RG_u$-module for each $u\in \mathscr G\skel 0$.

Note that the zero subspace \[\mathbf 0=\{0_u\mid u\in G\skel 0\}\] is an open subspace of $E$, being the image of a section of a local homeomorphism.

The \emph{support} of $\mathcal E$ is
\[\supp(\mathcal E) = \{u\in \mathscr G\skel 0\mid E_u\neq \{0_u\}\}.\]
Note that $\supp(\mathcal E)$ is an invariant subset of $\mathscr G_0$ but it need not be closed.


A \emph{(global) section} of $\mathcal E$ is a continuous mapping $s\colon \mathscr G\skel 0\to E$ such that $p\circ s=1_{\mathscr G\skel 0}$.  Note that if $s\colon \mathscr G\skel 0\to E$ is a section, then  its \emph{support}
\[\supp(s) = s\inv(E\setminus \mathbf 0)\] is closed.  We denote by $\Gamma_c(\mathcal E)$ the set of (global) sections with compact support.  Note that $\Gamma_c(\mathcal E)$ is an $R$-module with respect to pointwise operations and it be comes a unitary left $R\mathscr G$-module under the operation
\begin{equation}\label{eq:operation}
(fs)(u) =\sum_{\ran(\gamma)=u} f(\gamma)\gamma s(\dom(\gamma)) = \sum_{v\in \mathcal O_u}\sum_{\gamma\colon v\to u}f(\gamma)\gamma s(v).
\end{equation}
See~\cite{groupoidbundles} for details (where right actions and right modules are used).

If $e\in E_u$, then there is always a global section $s$ with compact support such that $s(u)=e$.  Indeed, we can choose a neighborhood $U$ of $e$ such that $p|_U\colon U\to p(U)$ is a homeomorphism with $p(U)$ open.  Then we can find a compact open neighborhood $V$  of $u$ with $V\subseteq U$ and define $s$ to be the restriction of $(p|_U)\inv$ on $V$ and $0$, elsewhere.  Then $s$ is continuous, $s(u)=e$ and the support of $s$ is closed and contained in $V$, whence compact.

Conversely, if $M$ is a unitary left $R\mathscr G$-module, we can define a $\mathscr G$-sheaf of $R$-modules $\mathrm{Sh}(M)= (E,p)$ where $E_u =\varinjlim_{u\in U}1_UM$ (with the direct limit over all compact open neighborhoods $U$ of $u$ in $\mathscr G\skel 0$) and if $\gamma\colon u\to v$ and $[m]_u$ is the class of $m$ at $u$, then $\gamma[m]_u = [Um]_v$ where $U$ is any compact local bisection containing $\gamma$. Here $E=\coprod_{u\in \mathscr G\skel 0} E_u$ has the germ topology.  See~\cite{groupoidbundles} for details.

\begin{Thm}[Steinberg~\cite{groupoidbundles}]\label{t:disint}
The functor $\mathcal E\mapsto \Gamma_c(\mathcal E)$ is an equivalence between the category $\mathcal B\mathscr G_R$ of $\mathscr G$-sheaves of $R$-modules and $\modu{R\mathscr G}$ with quasi-inverse $M\mapsto \mathrm{Sh}(M)$.
\end{Thm}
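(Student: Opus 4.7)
The plan is to establish the equivalence by constructing a unit and counit of an adjunction and checking both are isomorphisms. Before that, I would verify that both assignments are genuinely functorial: that $\Gamma_c(\mathcal E)$ is a unitary left $R\mathscr G$-module under \eqref{eq:operation}, which boils down to finiteness of the convolution sum (using the support of $f$ against compact local bisections) together with associativity along $\gamma\eta$; and that $\mathrm{Sh}(M)$ is a $\mathscr G$-sheaf of $R$-modules. For the latter, I would topologize $E=\coprod_{u\in \mathscr G\skel 0} E_u$ via germ sets $\{[m]_v\mid v\in U\}$ for $U$ compact open in $\mathscr G\skel 0$ and $m\in M$, verify that the projection to $\mathscr G\skel 0$ is a local homeomorphism, that the fiberwise $R$-module operations are continuous, and that the $\mathscr G$-action $\gamma[m]_u=[1_V m]_v$ (for $V\in \Bis_c(\mathscr G)$ with $\gamma\in V$) is well defined and satisfies the sheaf axioms.

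Next, I would construct the counit $\varepsilon_{\mathcal E}\colon \mathrm{Sh}(\Gamma_c(\mathcal E))\to \mathcal E$ stalkwise by $[s]_u\mapsto s(u)$. It is well defined because if $1_U s=1_U s'$ in $\Gamma_c(\mathcal E)$ for some compact open $U\ni u$, then $s$ and $s'$ agree on $U$ by the sheaf structure; it is surjective since every $e\in E_u$ extends to a compactly supported global section (as noted in the paragraph preceding the theorem); and it is injective because agreement at the point $u$ propagates to agreement in a neighborhood, hence to equality of germs in the direct limit defining $E_u$ for $\mathrm{Sh}(\Gamma_c(\mathcal E))$. Continuity on basic germ sets and equivariance under the $\mathscr G$-action follow by unwinding definitions.

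Constructing the unit $\eta_M\colon M\to \Gamma_c(\mathrm{Sh}(M))$ is the core step. Given $m\in M$, local units of $R\mathscr G$ produce a compact open $U\subseteq \mathscr G\skel 0$ with $1_U m=m$, and I would set $\eta_M(m)(u)=[m]_u$ for $u\in U$ and $\eta_M(m)(u)=0_u$ otherwise; continuity follows since the image lies inside one germ-basis set, and the support is contained in $U$, hence compact. For surjectivity, given $s\in \Gamma_c(\mathrm{Sh}(M))$ with compact support $K\subseteq \mathscr G\skel 0$, the germ topology lets me cover $K$ by finitely many compact opens $U_1,\ldots,U_n$ such that $s|_{U_i}=\eta_M(m_i)|_{U_i}$ for chosen $m_i\in M$. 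Using the Boolean algebra of compact opens in the Hausdorff space $\mathscr G\skel 0$, I refine into pairwise disjoint compact opens $V_i\subseteq U_i$ covering $K$ and obtain $s=\sum_i \eta_M(1_{V_i}m_i)$. Injectivity then follows from an analogous compactness-plus-refinement argument: if $\eta_M(m)=0$, cover a suitable $U$ with $1_U m=m$ by finitely many compact opens $V_j$ with $1_{V_j}m=0$ and conclude $m=0$.

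The hard part is this surjectivity argument for $\eta_M$: one must pass from a purely local description of a section in terms of elements of $M$ to a single globally valid expression. The Hausdorffness and compact-open basis of $\mathscr G\skel 0$ render the partition-of-unity step purely combinatorial (disjoint refinement in a Boolean algebra, no bump functions needed), but one has to verify the local representatives $m_i$ chosen on overlapping pieces produce the same section after multiplying by $1_{V_i}$, which requires a second refinement using the sheaf structure on $\mathrm{Sh}(M)$. Once $\eta$ and $\varepsilon$ are shown to be natural pointwise isomorphisms, the triangle identities are a routine unwinding, and the equivalence follows.
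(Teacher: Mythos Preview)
The paper does not prove this theorem: it is quoted verbatim from~\cite{groupoidbundles} and used as a black box, so there is no proof in the present paper to compare against. Your outline is the expected approach and is essentially correct; a few small remarks follow.

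First, you frame the argument as building a unit and counit of an adjunction and then checking the triangle identities. For an equivalence of categories you only need natural isomorphisms $\eta\colon 1\Rightarrow \Gamma_c\circ\mathrm{Sh}$ and $\varepsilon\colon \mathrm{Sh}\circ\Gamma_c\Rightarrow 1$; the triangle identities are unnecessary (they would upgrade the equivalence to an adjoint equivalence, which is not asserted). Once you have shown each $\eta_M$ and each $\varepsilon_{\mathcal E}$ is an isomorphism and both families are natural, you are done.

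Second, your injectivity argument for $\varepsilon_{\mathcal E}$ is correct but deserves one extra sentence: two sections of an \'etale space that agree at a point agree on an open neighborhood, and since $\mathscr G\skel 0$ has a basis of compact opens you may take that neighborhood to be compact open $U$, whence $1_Us=1_Us'$ and the germs coincide in the direct limit defining the stalk of $\mathrm{Sh}(\Gamma_c(\mathcal E))$ at $u$.

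Third, in the surjectivity step for $\eta_M$ you should note explicitly why the disjoint refinement $V_1,\ldots,V_n$ still covers $\supp(s)$ and why each $V_i$ is compact open: in a locally compact Hausdorff space with a basis of compact opens, the compact open sets form a generalized Boolean algebra, so $V_i=U_i\setminus(U_1\cup\cdots\cup U_{i-1})$ is again compact open and $\bigcup_i V_i=\bigcup_i U_i$. With that, $s=\eta_M\bigl(\sum_i 1_{V_i}m_i\bigr)$ follows as you indicate.

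Finally, the well-definedness of the $\mathscr G$-action on $\mathrm{Sh}(M)$, namely independence of $\gamma[m]_u=[1_Vm]_{\ran(\gamma)}$ from the choice of compact local bisection $V\ni\gamma$, is the one place where non-Hausdorffness of $\mathscr G\skel 1$ could bite; it is handled by passing to a smaller $W\in\Bis_c(\mathscr G)$ with $\gamma\in W\subseteq V\cap V'$ and observing $1_{\ran(W)}\ast 1_V=1_W=1_{\ran(W)}\ast 1_{V'}$, so the germs of $1_Vm$ and $1_{V'}m$ at $\ran(\gamma)$ agree.
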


\begin{Rmk}\label{r:find.ideals}
As a consequence of Theorem~\ref{t:disint}, $M\cong \Gamma_c(\mathrm{Sh}(M))$ for any unitary module $M$ and so we have an equality of annihilator ideals \[\ann(M)=\ann(\Gamma_c(\mathrm{Sh}(M))).\]  In particular, we have $I=\ann(\Gamma_c(\mathrm{Sh}(R\mathscr G/I)))$.  Thus we can describe the ideal structure of $R\mathscr G$ in terms of the annihilators of modules of the form $\Gamma_c(\mathcal E)$.
\end{Rmk}

\section{The ideal structure of ample groupoid algebras}
In this section, we show how the Disintegration Theorem (Theorem~\ref{t:disint}) provides information about the ideal structure of $R\mathscr G$.

\subsection{General ideals}
Our main goal in this subsection is to prove the following theorem, generalizing a result of Demeneghi~\cite{demeneghi} that was originally proved over fields.  Demeneghi's proof is indirect, via crossed products, and therefore quite long.  Our proof is direct, using the Disintegration Theorem, and shorter, even including the 11 pages of~\cite{groupoidbundles}.

\begin{Thm}\label{t:annih}
Let $\mathscr G$ be an ample groupoid and $R$ a ring.  Let $\mathcal E=(E,p)$ be a $\mathscr G$-sheaf of $R$-modules.  Then the equality
\[\ann(\Gamma_c(\mathcal E))=\bigcap_{u\in \mathscr G\skel 0} \ann(\Ind_u(E_u))\] holds. Consequently, every ideal $I\lhd R\mathscr G$ is an intersection of annihilators of induced modules.
\end{Thm}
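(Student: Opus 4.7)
The plan is to reformulate Proposition~\ref{p:annihilator.induced} in a stalk-wise form that is independent of the choice of connecting arrows $\gamma_v$. Since $\gamma_v$ acts as an $R$-linear isomorphism $E_u\to E_v$, applying $\gamma_w$ to the defining condition of Proposition~\ref{p:annihilator.induced} and setting $e=\gamma_v m$, one shows that $f\in \ann(\Ind_u(E_u))$ is equivalent to the assertion
\[
\sum_{\gamma\colon v\to w} f(\gamma)\,\gamma e = 0 \quad \text{in $E_w$, for all $v,w\in \mathcal O_u$ and all $e\in E_v$.}
\]
This reformulation makes the claimed equality easier to handle because both sides can be read off from the $\mathscr G$-action on the stalks along each orbit.

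For the inclusion $\bigcap_u \ann(\Ind_u(E_u))\subseteq \ann(\Gamma_c(\mathcal E))$, given $f$ in the intersection and $s\in \Gamma_c(\mathcal E)$, I would compute $(fs)(u)$ via \eqref{eq:operation} and group the sum by the source $v=\dom(\gamma)$. For each $v\in \mathcal O_u$, the inner sum $\sum_{\gamma\colon v\to u} f(\gamma)\,\gamma s(v)$ vanishes by the stalk-wise condition applied with $w=u$ and $e=s(v)$. Thus $(fs)(u)=0_u$ pointwise, and $fs=0$.

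For the reverse inclusion, suppose $f\in \ann(\Gamma_c(\mathcal E))$ and fix $u$, points $v,w\in \mathcal O_u$, and $e\in E_v$. The goal is to realize $\sum_{\gamma\colon v\to w} f(\gamma)\,\gamma e$ as $(fs)(w)$ for a well-chosen section $s$. Writing $f$ as a finite $R$-combination of $1_{U_i}$ with $U_i\in \Bis_c(\mathscr G)$, the set $Y$ of all sources $\dom(\gamma')$ with $\gamma'\in \bigcup_i U_i$ and $\ran(\gamma')=w$ is finite. Using the Hausdorff property of $\mathscr G\skel 0$ together with the ample basis of compact open sets, pick a compact open neighborhood $V$ of $v$ disjoint from $Y\setminus\{v\}$, and, following the construction recalled in the excerpt, build a section $s$ of compact support with $s(v)=e$ and $\supp(s)\subseteq V$. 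Then every other source in $Y$ contributes zero to $(fs)(w)$, so $(fs)(w)$ collapses to $\sum_{\gamma\colon v\to w} f(\gamma)\,\gamma e$, which vanishes since $fs=0$. By the stalk-wise reformulation this gives $f\in \ann(\Ind_u(E_u))$.

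The final assertion about arbitrary ideals is then immediate from Remark~\ref{r:find.ideals}: applying the first part of the theorem to $\mathcal E=\mathrm{Sh}(R\mathscr G/I)$ writes $I$ as an intersection of annihilators of modules induced from the stalks $E_u$. The step most likely to require care is the separation argument producing the neighborhood $V$, where both the Hausdorffness of $\mathscr G\skel 0$ and the ample assumption are used essentially; everything else reduces to bookkeeping with the formulas~\eqref{eq:action.ind} and~\eqref{eq:operation}.
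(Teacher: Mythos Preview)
Your proof is correct and follows the same overall architecture as the paper's: both directions hinge on the finiteness of $\ran^{-1}(w)\cap\supp(f)$ together with the Hausdorffness of $\mathscr G\skel 0$ to perform a separation argument, and the easy direction $J\subseteq I$ is identical in substance.  The one genuine tactical difference is in the proof of $I\subseteq J_u$: you localize the \emph{section}, building $s$ with $s(v)=e$ and $\supp(s)$ confined to a small compact open set missing the other sources, and then evaluate $(fs)(w)$ directly; the paper instead keeps the section arbitrary with $s(u)=e$ and localizes the \emph{function}, replacing $f$ by $1_{U_w^{-1}}\ast f\ast 1_{U_v}$ (which still lies in $I$ since $I$ is an ideal) before evaluating at $u$.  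Your route avoids invoking the ideal property of $I$ and is arguably more transparent once you have the stalk-wise reformulation of Proposition~\ref{p:annihilator.induced}; the paper's route has the minor advantage of not needing to control the support of the section, only its value at a single point.  Both are short and rest on the same underlying idea.
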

\begin{proof}
The final statement follows from the first by the equivalence of categories in Theorem~\ref{t:disint} (cf.~Remark~\ref{r:find.ideals}).  So we prove the first statement.
 Let $I=\ann(\Gamma_c(\mathcal E))$ and put $J_u = \ann(\Ind_u(E_u))$ for $u\in \mathscr G\skel 0$. Set $J=\bigcap_{u\in \mathscr G\skel 0}J_u$.   Then we want to prove that $I=J$.

Fix $u\in \mathscr G\skel 0$ and, for each $v\in \mathcal O_u$, choose $\gamma_v\colon u\to v$.  To show $I\subseteq J_u$, it suffices, by Proposition~\ref{p:annihilator.induced}, to show that if $f\in I$, then, for each $v,w\in \mathcal O_u$, we have that
\[\sum_{\gamma\colon v\to w}f(\gamma)(\gamma_w\inv \gamma\gamma_v)\in \ann(E_u).\]

So let $e\in E_u$ and let $s\in \Gamma_c(\mathcal E)$ with $s(u) =e$.  Since $\mathscr G$ is ample, the set $\ran\inv(w)\cap \supp(f)$ is finite. Since $\mathscr G\skel 0$ is Hausdorff, we can find $U\subseteq \mathscr G\skel 0$ compact open with $v\in U$ and $U\cap \dom(\ran\inv(w)\cap \supp(f))\subseteq \{v\}$.

Let $U_v$ be a compact local bisection containing $\gamma_v$ and $U_w$ a compact local bisection containing $\gamma_w$.  Replacing $U_v$ by $UU_v$, we may assume that
\[\ran(U_v)\cap \dom(\ran\inv (w)\cap \supp(f))\subseteq \{v\}.\]  By construction, the only elements in the support of $1_{U_w\inv}\ast f\ast 1_{U_v}$ with range $u$ are of the form $\gamma_w\inv\gamma\gamma_v$ with $\gamma\colon v\to w$ and $f(\gamma)\neq 0$.   As $1_{U_w\inv}\ast f\ast 1_{U_v}\in I$, we have by \eqref{eq:operation} that
\begin{align*}
0&=(1_{U_w\inv}\ast f\ast 1_{U_v}s)(u) \\ &= \sum_{\ran(\alpha)=u}(1_{U_w\inv}\ast f\ast 1_{U_v})(\alpha)\alpha s(\dom(\alpha)) \\ &=\sum_{\gamma\colon v\to w}f(\gamma)(\gamma_w\inv \gamma\gamma_v)s(u) \\ &=\sum_{\gamma\colon v\to w}f(\gamma)(\gamma_w\inv \gamma\gamma_v)e
\end{align*}
as required.  Thus $I\subseteq J_u$ for all $u\in \mathscr G\skel 0$.

Suppose now that $f\in J$ and let $s\in \Gamma_c(\mathcal E)$.  Then
\begin{equation}\label{eq:mustcheck}
(fs)(v) = \sum_{u\in \mathcal O_v}\sum_{\gamma\colon u\to v}f(\gamma)\gamma s(u).
\end{equation}
Let us fix $u\in \mathcal O_v$ and let $\gamma_u=u$ and $\gamma_v\colon u\to v$ be arbitrary.  Then, since $f\in J\subseteq J_u$, we have by Proposition~\ref{p:annihilator.induced} that
\[\sum_{\gamma\colon u\to v}f(\gamma)(\gamma_v\inv \gamma) s(u)=0.\] Multiplying on the left by $\gamma_v$ yields that, for each $u\in \mathcal O_v$,
\[\sum_{\gamma\colon u\to v}f(\gamma)\gamma s(u)=0\] and so the right hand side of \eqref{eq:mustcheck} is $0$.  Thus $f\in I$.  This completes the proof.
\end{proof}

\begin{Rmk}
  More concretely, if $I\lhd R\mathscr G$ is an ideal, then following the construction of the proof we see that  \[I = \bigcap_{u\in \mathscr G\skel 0}\ann\left(\Ind_u\left(\varinjlim_{u\in U}1_U(R\mathscr G/I)\right)\right)\] where $\varinjlim_{u\in U} 1_U(R\mathscr G/I)$ has the $RG_u$-module structure \[\gamma[1_Uf+I] = [1_V1_Uf+I]\] for $V$ any compact local bisection containing $\gamma$.
\end{Rmk}

\subsection{Primitive ideals}
Recall that an ideal $I$ of a ring is \emph{primitive} if it is the annihilator of a simple module. (Technically, we should talk about left primitive ideals since we are using left modules, but because groupoid algebras admit an involution, it doesn't matter.) We prove that each primitive ideal is the annihilator of a single induced representation (rather than an intersection of such annihilators, as in Theorem~\ref{t:annih}).  Unfortunately, we are not yet able to show, in general, that the induced representation is simple.   Still, this is new progress toward Exel's  Effros-Hahn conjecture.

\begin{Thm}\label{t:primitive.case}
Let $R$ be a commutative ring with unit and $\mathscr G$ an ample groupoid.  Let $I\lhd R\mathscr G$ be a primitive ideal.  Then $I=\ann(\Ind_u(M))$ for some $u\in \mathscr G\skel 0$ and $RG_u$-module $M$.
\end{Thm}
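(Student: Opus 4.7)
Since $I$ is primitive, one has $I=\ann(S)$ for some simple $R\mathscr G$-module $S$. The plan is to invoke the Disintegration Theorem (Theorem~\ref{t:disint}) to present $S\cong \Gamma_c(\mathcal E)$ for some $\mathscr G$-sheaf of $R$-modules $\mathcal E=(E,p)$, pick any $u\in\supp(\mathcal E)$ (nonempty because $S\neq 0$), and prove that $I=\ann(\Ind_u(E_u))$; this will realize the theorem with $M=E_u$.

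The inclusion $I\subseteq \ann(\Ind_u(E_u))$ is free from Theorem~\ref{t:annih}, which writes $\ann(\Gamma_c(\mathcal E))$ as the intersection $\bigcap_v \ann(\Ind_v(E_v))$. The interesting direction is the reverse. Given $f\in \ann(\Ind_u(E_u))$ and $s\in S$, I will first reuse the computation from the ``$J\subseteq I$'' half of the proof of Theorem~\ref{t:annih}, applied only at the single orbit $\mathcal O_u$ via Proposition~\ref{p:annihilator.induced}, to conclude that $(fs)(w)=0$ for every $w\in\mathcal O_u$.

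The new ingredient is to consider
\[S_u=\{s'\in S\mid s'(w)=0\text{ for every }w\in \mathcal O_u\}.\]
Using the action formula \eqref{eq:operation} and the fact that every arrow $\gamma\colon v\to w$ with $w\in\mathcal O_u$ automatically has $v\in\mathcal O_u$, one checks in a line that $S_u$ is an $R\mathscr G$-submodule of $S$. Simplicity of $S$ then gives $S_u=0$ or $S_u=S$; the latter is excluded because $u\in\supp(\mathcal E)$ allows one to pick $0\neq e\in E_u$ and, via the section-extension construction recalled after Theorem~\ref{t:disint}, produce a section $s_e\in \Gamma_c(\mathcal E)$ with $s_e(u)=e\neq 0$, so $s_e\notin S_u$. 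Hence $S_u=0$, and since the previous step showed $fs\in S_u$, we get $fs=0$; as $s\in S$ was arbitrary, $f\in \ann(S)=I$.

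The lone conceptual step is isolating the submodule $S_u$; everything else is either routine bookkeeping or recycled from Theorem~\ref{t:annih}. This is where simplicity of $S$ is converted into ``the intersection in Theorem~\ref{t:annih} collapses to a single term.'' Note that the argument yields no irreducibility of $M=E_u$, consistent with the paper's comment that the author cannot yet force the induced representation to be simple.
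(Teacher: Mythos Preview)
Your proof is correct. The paper's proof uses the same setup (Disintegration Theorem, pick $u\in\supp(\mathcal E)$, get $I\subseteq J_u$ from Theorem~\ref{t:annih}) but organizes the reverse inclusion differently: rather than introducing the submodule $S_u$ of sections vanishing on $\mathcal O_u$, the paper argues by contradiction that if $J_u$ does not annihilate $\Gamma_c(\mathcal E)$ then $J_u s=\Gamma_c(\mathcal E)$ for some $s$ (since $J_u s$ is a submodule and nonzero), writes a section $t$ with $t(u)=e\neq 0$ as $fs$ with $f\in J_u$, and then shows via the same Proposition~\ref{p:annihilator.induced} computation that $(fs)(u)=0$, a contradiction. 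Both arguments exploit simplicity once, but on different submodules: the paper uses $J_u s$, you use $S_u$. Your route has the minor conceptual advantage of making explicit \emph{why} the intersection in Theorem~\ref{t:annih} collapses (namely, vanishing on one orbit forces global vanishing), at the cost of checking vanishing at every $w\in\mathcal O_u$ rather than just at $u$; the paper's route is a line shorter since evaluating at the single point $u$ already suffices.
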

\begin{proof}
By Theorem~\ref{t:disint} we may assume that our simple module with annihilator $I$ is of the form $\Gamma_c(\mathcal E)$ for some $\mathscr G$-sheaf $\mathcal E=(E,p)$ of $R$-modules. Let $u\in \supp(\mathcal E)$  and put $J_u = \ann(\Ind_u(E_u))$.  We claim that $I=J_u$.
We know that $I\subseteq J_u$ by Theorem~\ref{t:annih};  we must prove the converse.  Suppose that $J_u$ does not annihilate $\Gamma_c(\mathcal E)$.  Then we can find a section $s$ with $J_us\neq \{0\}$.  As $J_u$ is an ideal, $J_us$ is a submodule and so $J_us=\Gamma_c(\mathcal E)$.  Let $0_u\neq e\in E_u$.  Then there is a section $t\in \Gamma_c(\mathcal E)$ such that $t(u)=e$.  Let $f\in J_u$ with $fs=t$.  Then we have that
\begin{equation}\label{eq:big.helper}
e=t(u)=(fs)(u) = \sum_{v\in \mathcal O_u}\sum_{\gamma\colon v\to u}f(\gamma)\gamma s(v).
\end{equation}

Let us fix $v\in \mathcal O_u$ and fix $\gamma_v\colon u\to v$; put $\gamma_u=u$.  Then by the assumption $f\in J_u$ and Proposition~\ref{p:annihilator.induced}, we have (since $\gamma_v\inv s(v)\in E_u$) that
\[0=\sum_{\gamma\colon v\to u} f(\gamma)(\gamma\gamma_v)(\gamma_v\inv s(v))=\sum_{\gamma\colon v\to u}f(\gamma)\gamma s(v).\]  Thus the right hand side of \eqref{eq:big.helper} is $0$, contradicting that $e\neq 0$.  We conclude that $J_u\subseteq I$.  This completes the proof.
\end{proof}

We now show that the $RG_u$-module $M$ above can be chosen to be simple under some strong hypotheses on the base ring and isotropy groups.  Let $J(S)$ denote the Jacobson radical of a ring $S$. A ring $S$ is called a \emph{left max ring} if each non-zero left $S$-module has a maximal (proper) submodule.  For example, any Artinian ring  $S$ is a left max ring.  Indeed, if $M\neq 0$, then $J(S)M\neq M$ by nilpotency of the Jacobson radical.  But $M/J(S)M$ is then a non-zero $S/J(S)$-module and every non-zero module over a semisimple ring is a direct sum of simple modules and hence has a simple quotient.  Thus $M$ has a maximal proper submodule.  A result of Hamsher~\cite{Hamsher} says that if $S$ is commutative, then $S$ is a left max ring if and only if $J(S)$ is $T$-nilpotent  (e.g., if $J(S)$ is nilpotent) and $S/J(S)$ is  von Neumann regular ring.   We now establish the  Effros-Hahn conjecture in the case that all isotropy group rings are left max rings.

\begin{Thm}\label{t:left.max}
Let $R$ be a commutative ring and $\mathscr G$ an ample groupoid such that $RG_u$ is a left max ring for all $u\in \mathscr G\skel 0$.  Then the primitive ideals of $R\mathscr G$ are exactly the ideals of the form $\ann(\Ind_u(M))$ where $M$ is a simple $RG_u$-module.
\end{Thm}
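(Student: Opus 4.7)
The plan is to refine the proof of Theorem~\ref{t:primitive.case} by invoking the left max hypothesis to replace the module $E_u$ appearing there with a suitable simple quotient, and then to observe that the computation in that earlier proof still produces a contradiction even when the relevant sums are only known to land in a proper submodule rather than being zero outright.

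The forward direction is immediate from Theorem~\ref{t:induced.simple}: if $M$ is a simple $RG_u$-module, then $\Ind_u(M)$ is a simple $R\mathscr G$-module, so $\ann(\Ind_u(M))$ is primitive. For the reverse, I would begin with a primitive ideal $I\lhd R\mathscr G$, use Theorem~\ref{t:disint} to write $I=\ann(\Gamma_c(\mathcal E))$ for a $\mathscr G$-sheaf $\mathcal E$ with $\Gamma_c(\mathcal E)$ simple, and choose $u\in\supp(\mathcal E)$, so that $E_u\neq 0$. The left max hypothesis on $RG_u$ then produces a maximal proper submodule $P\subsetneq E_u$, yielding the simple $RG_u$-module $N:=E_u/P$. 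Theorem~\ref{t:induced.simple} ensures $\Ind_u(N)$ is simple, so $J:=\ann(\Ind_u(N))$ is primitive; exactness of induction together with Proposition~\ref{p:annihilator.induced} gives $I\subseteq J$, and the goal becomes the reverse containment $J\subseteq I$.

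To establish $J\subseteq I$, I would argue by contradiction, mirroring the structure of the proof of Theorem~\ref{t:primitive.case}. If $J$ does not annihilate $\Gamma_c(\mathcal E)$, then by simplicity there is $s\in\Gamma_c(\mathcal E)$ with $Js=\Gamma_c(\mathcal E)$. Choose $e\in E_u\setminus P$ (possible since $P$ is proper), then $t\in\Gamma_c(\mathcal E)$ with $t(u)=e$, and finally $f\in J$ with $fs=t$. Expanding via \eqref{eq:operation} and writing each arrow $\gamma\colon v\to u$ as $(\gamma\gamma_v)\gamma_v\inv$ with $\gamma_u=u$ produces
\[e=(fs)(u)=\sum_{v\in\mathcal O_u}\Bigl(\sum_{\gamma\colon v\to u}f(\gamma)(\gamma\gamma_v)\Bigr)\bigl(\gamma_v\inv s(v)\bigr).\]
Applying Proposition~\ref{p:annihilator.induced} to $f\in J=\ann(\Ind_u(N))$ with $w=u$ shows that each inner coefficient $\sum_\gamma f(\gamma)(\gamma\gamma_v)$ lies in $\ann(N)$; acting on $\gamma_v\inv s(v)\in E_u$ therefore lands in $P$. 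Since only arrows in the finite set $\supp(f)\cap\ran\inv(u)$ contribute, the outer sum is finite, and one concludes $e\in P$, contradicting the choice of $e$.

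The main obstacle, relative to Theorem~\ref{t:primitive.case}, is that the cleaner vanishing $\sum_{\gamma\colon v\to u}f(\gamma)\gamma s(v)=0$ is no longer available, because $\ann(N)$ properly contains $\ann(E_u)$ in general. The key observation, which makes the left max hypothesis just strong enough, is that membership of each inner sum in $P$ is still fatal once $e$ has been chosen outside $P$; the left max hypothesis is used only to guarantee existence of the maximal submodule $P$, hence of such an $e$. No machinery beyond Theorems~\ref{t:disint} and~\ref{t:induced.simple} and Proposition~\ref{p:annihilator.induced} is required.
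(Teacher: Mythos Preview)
Your proposal is correct and follows essentially the same argument as the paper's proof: both refine Theorem~\ref{t:primitive.case} by choosing a maximal submodule of $E_u$ via the left max hypothesis, set $J=\ann(\Ind_u(E_u/\text{max submodule}))$, and then rerun the contradiction argument with the key observation that Proposition~\ref{p:annihilator.induced} now only forces each inner sum $\sum_{\gamma\colon v\to u}f(\gamma)\gamma s(v)$ into the maximal submodule, which is still enough to contradict the choice of $e$ outside it. The only cosmetic differences are naming (the paper calls the maximal submodule $N$ rather than $P$) and that the paper explicitly invokes the equality $I=\ann(\Ind_u(E_u))$ from Theorem~\ref{t:primitive.case} to get $I\subseteq J$, whereas you gesture at exactness of induction---you should make explicit that $I\subseteq\ann(\Ind_u(E_u))$ comes from Theorem~\ref{t:annih} or~\ref{t:primitive.case} before exactness applies.
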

\begin{proof}
By Theorem~\ref{t:induced.simple} it suffices to show that any primitive ideal $I$ is of the form $\ann(\Ind_u(M))$ where $M$ is a simple $RG_u$-module.
By Theorem~\ref{t:disint} we may assume that our simple module with annihilator $I$ is of the form $\Gamma_c(\mathcal E)$ for some $\mathscr G$-sheaf $\mathcal E=(E,p)$ of $R$-modules. Let $u\in \supp(\mathcal E)$.  We already know from the proof of Theorem~\ref{t:primitive.case} that $I = \ann(\Ind_u(E_u))$.  Let $N$ be a maximal submodule of $E_u$ (which exists by assumption on $RG_u$) and let $J=\ann(\Ind_u(E_u/N))$.  Since $E_u/N$ is simple, it suffices to show that $J=I$.  Clearly, $I\subseteq J$ (by Proposition~\ref{p:annihilator.induced}) since $\ann(E_u)\subseteq \ann(E_u/N)$.  So it suffices to show that $J$ annihilates $\Gamma_c(\mathcal E)$.

Suppose that this is not the case.  Then there exists $s\in \Gamma_c(\mathcal E)$ with $Js\neq 0$.  Since $J$ is an ideal and $\Gamma_c(\mathcal E)$ is simple, we deduce $Js=\Gamma_c(\mathcal E)$.   Let $e\in E_u\setminus N$ (using that $N$ is a proper submodule) and let $t\in \Gamma_c(\mathcal E)$ with $t(u)=e$.  Then $t=fs$ with $f\in J$.  Let us compute
\begin{equation}\label{eq:big.helper.2}
e=t(u)=(fs)(u) = \sum_{v\in \mathcal O_u}\sum_{\gamma\colon v\to u}f(\gamma)\gamma s(v).
\end{equation}

Let us fix $v\in \mathcal O_u$, fix $\gamma_v\colon u\to v$ and set $\gamma_u=u$.  Then by the assumption $f\in J$ and Proposition~\ref{p:annihilator.induced}, we have (since $\gamma_v\inv s(v)\in E_u$) that
\[\sum_{\gamma\colon v\to u}f(\gamma)\gamma s(v)=\sum_{\gamma\colon v\to u} f(\gamma)(\gamma\gamma_v)(\gamma_v\inv s(v))\in N.\]  We deduce from \eqref{eq:big.helper.2} that $e\in N$, which is a contradiction. It follows that $J$ annihilates $\Gamma_c(\mathcal E)$ and so $I=J$.
\end{proof}

We obtain as a consequence a special case of Exel's Effros-Hahn conjecture, which will include the algebras of principal groupoids (or, more generally, groupoids with finite isotropy) over a field.  Recall that a groupoid is called \emph{principal} if all its isotropy groups are trivial.  

It is well known that a group ring $RG$ is Artinian if and only if $R$ is Artinian and $G$ is finite~\cite{PassmanBook}.  A result of Villamayor~\cite{villamayor} says that, for a group $G$ and commutative ring $R$, one has that $RG$ is von Neumann regular if and only if $R$ is von Neumann regular, $G$ is locally finite and the order of any element of $G$ is invertible in $R$.  The reader should recall that any Artinian semisimple ring is von Neumann regular and also that any von Neumann regular ring has a zero Jacobson radical~\cite[Cor.~4.24]{LamBook}.

\begin{Cor}\label{c:main}
Let $R$ be a commutative Artinian ring and $\mathscr G$ an ample groupoid such that each isotropy group of $\mathscr G$ is either finite or locally finite abelian with  elements having order invertible in $R/J(R)$.  Then the primitive ideals of $R\mathscr G$ are precisely the annihilators of modules induced from simple modules of isotropy group rings.
\end{Cor}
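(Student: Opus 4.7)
The plan is to reduce the corollary to Theorem~\ref{t:left.max} by checking that under the hypotheses every isotropy group ring $RG_u$ is a left max ring. The two cases on the structure of $G_u$ suggest invoking two different characterizations: direct Artinian-ness in the finite case and Hamsher's criterion in the abelian case.

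First I would dispose of the finite isotropy case. If $G_u$ is finite, then since $R$ is commutative Artinian, the group ring $RG_u$ is Artinian (a finitely generated module over an Artinian ring), and any Artinian ring is a left max ring by the argument recalled in the text just before Theorem~\ref{t:left.max} (nilpotency of the Jacobson radical together with semisimplicity of the quotient).

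Next, suppose $G_u$ is locally finite abelian with all element orders invertible in $R/J(R)$. Then $RG_u$ is commutative, so Hamsher's theorem applies: it suffices to show that $J(RG_u)$ is $T$-nilpotent and that $RG_u/J(RG_u)$ is von Neumann regular. Since $R$ is Artinian, $J(R)$ is nilpotent, say $J(R)^n=0$; the ideal $J(R)G_u$ of $RG_u$ then satisfies $(J(R)G_u)^n = J(R)^n G_u = 0$, so it is nilpotent and hence contained in $J(RG_u)$. On the other hand, $R/J(R)$ is a commutative semisimple Artinian ring, hence von Neumann regular, and $G_u$ is locally finite with element orders invertible in $R/J(R)$, so by Villamayor's theorem $(R/J(R))G_u \cong RG_u/J(R)G_u$ is von Neumann regular. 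In particular it has zero Jacobson radical, which forces $J(RG_u)\subseteq J(R)G_u$, and therefore $J(RG_u) = J(R)G_u$. Consequently $J(RG_u)$ is nilpotent (so certainly $T$-nilpotent) and $RG_u/J(RG_u)\cong (R/J(R))G_u$ is von Neumann regular, verifying Hamsher's criterion.

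With $RG_u$ a left max ring in both cases, Theorem~\ref{t:left.max} immediately yields that the primitive ideals of $R\mathscr G$ are exactly the annihilators of modules $\Ind_u(M)$ with $M$ a simple $RG_u$-module. The only real technical step is the identification $J(RG_u)=J(R)G_u$ in the abelian case; this is the main obstacle and is handled by sandwiching $J(RG_u)$ between the nilpotent ideal $J(R)G_u$ and the kernel of the projection to the von Neumann regular ring $(R/J(R))G_u$, as above.
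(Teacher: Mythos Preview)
Your proposal is correct and follows essentially the same route as the paper: reduce to Theorem~\ref{t:left.max} by verifying that each $RG_u$ is a left max ring, handling the finite case via Artinian-ness and the locally finite abelian case via Hamsher's criterion together with the identification $J(RG_u)=J(R)RG_u$ obtained from Villamayor's theorem. The only differences are cosmetic (you write $J(R)G_u$ for the paper's $J(R)RG_u$ and supply slightly more detail in a couple of places).
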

\begin{proof}
By Theorem~\ref{t:left.max} it suffices to show that $RG_u$ is a left max ring for each $u\in \mathscr G\skel 0$. If $G_u$ is finite, then $RG_u$ is Artinian and hence a left max ring. Suppose that $G_u$ is locally finite abelian with each element of order invertible in $R/J(R)$.  In particular, $RG_u$ is commutative and it suffices by Hamsher's theorem to show that $J(RG_u)$ is nilpotent and $RG_u/J(RG_u)$ is von Neumann regular. First note that since $J(R)$ is nilpotent, we have that $J(R)RG_u$ is a nilpotent ideal of $RG_u$ and hence contained $J(RG_u)$.  On the other hand, $RG_u/J(R)RG_u\cong (R/J(R))G_u$ is von Neumann regular by~\cite{villamayor} and the hypotheses, and hence has a zero radical.  Thus $J(RG_u)=J(R)RG_u$, and hence is nilpotent, and $RG_u/J(RG_u)$ is von Neumann regular. Therefore,  $RG_u$ is a left max ring.
\end{proof}

If $R$ is  commutative Artinian of characteristic zero, then the order of any element of a locally finite group is invertible in $R/J(R)$ (which is a product of fields of characteristic zero) and so Corollary~\ref{c:main} applies if each isotropy group is either finite or locally finite abelian.

\subsection*{Acknowledgments}
The author would like to thank Enrique Pardo for pointing out Villamayor's result~\cite{villamayor} and that regular rings have zero Jacobson radical, leading to the current formulation of Corollary~\ref{c:main}, which improves on a previous version.


\begin{thebibliography}{10}

\bibitem{operatorguys1}
L.~O. Clark, C.~Farthing, A.~Sims, and M.~Tomforde.
\newblock A groupoid generalisation of {L}eavitt path algebras.
\newblock {\em Semigroup Forum}, 89(3):501--517, 2014.

\bibitem{demeneghi}
P.~{Demeneghi}.
\newblock {The ideal structure of {S}teinberg algebras}.
\newblock {\em ArXiv e-prints}, Oct. 2017.

\bibitem{ExelDoku}
M.~Dokuchaev and R.~Exel.
\newblock The ideal structure of algebraic partial crossed products.
\newblock {\em Proc. Lond. Math. Soc. (3)}, 115(1):91--134, 2017.

\bibitem{EffHahn}
E.~G. Effros and F.~Hahn.
\newblock {\em Locally compact transformation groups and {$C^{\ast} $}-
  algebras}.
\newblock Memoirs of the American Mathematical Society, No. 75. American
  Mathematical Society, Providence, R.I., 1967.

\bibitem{EffHahnBull}
E.~G. Effros and F.~Hahn.
\newblock Locally compact transformation groups and {$C\sp*$}-algebras.
\newblock {\em Bull. Amer. Math. Soc.}, 73:222--226, 1967.

\bibitem{Exel}
R.~Exel.
\newblock Inverse semigroups and combinatorial {$C\sp \ast$}-algebras.
\newblock {\em Bull. Braz. Math. Soc. (N.S.)}, 39(2):191--313, 2008.

\bibitem{Effhahnpf}
E.~C. Gootman and J.~Rosenberg.
\newblock The structure of crossed product {$C^{\ast} $}-algebras: a proof of
  the generalized {E}ffros-{H}ahn conjecture.
\newblock {\em Invent. Math.}, 52(3):283--298, 1979.

\bibitem{Hamsher}
R.~M. Hamsher.
\newblock Commutative rings over which every module has a maximal submodule.
\newblock {\em Proc. Amer. Math. Soc.}, 18:1133--1137, 1967.

\bibitem{IonescuWilliams}
M.~Ionescu and D.~Williams.
\newblock The generalized {E}ffros-{H}ahn conjecture for groupoids.
\newblock {\em Indiana Univ. Math. J.}, 58(6):2489--2508, 2009.

\bibitem{LamBook}
T.~Y. Lam.
\newblock {\em A first course in noncommutative rings}, volume 131 of {\em
  Graduate Texts in Mathematics}.
\newblock Springer-Verlag, New York, 1991.

\bibitem{Lawson}
M.~V. Lawson.
\newblock {\em Inverse semigroups}.
\newblock World Scientific Publishing Co. Inc., River Edge, NJ, 1998.
\newblock The theory of partial symmetries.

\bibitem{PassmanBook}
D.~S. Passman.
\newblock {\em The algebraic structure of group rings}.
\newblock Pure and Applied Mathematics. Wiley-Interscience [John Wiley \&\
  Sons], New York-London-Sydney, 1977.

\bibitem{Paterson}
A.~L.~T. Paterson.
\newblock {\em Groupoids, inverse semigroups, and their operator algebras},
  volume 170 of {\em Progress in Mathematics}.
\newblock Birkh{\"a}user Boston Inc., Boston, MA, 1999.

\bibitem{renaultdisintegration}
J.~Renault.
\newblock Repr{\'e}sentation des produits crois{\'e}s d'alg{\`e}bres de
  groupo\"\i des.
\newblock {\em J. Operator Theory}, 18(1):67--97, 1987.

\bibitem{RenaultEH}
J.~Renault.
\newblock The ideal structure of groupoid crossed product {$C^\ast$}-algebras.
\newblock {\em J. Operator Theory}, 25(1):3--36, 1991.
\newblock With an appendix by Georges Skandalis.

\bibitem{resendeetale}
P.~Resende.
\newblock {\'E}tale groupoids and their quantales.
\newblock {\em Adv. Math.}, 208(1):147--209, 2007.

\bibitem{Sauvage}
J.-L. Sauvageot.
\newblock Id\'{e}aux primitifs induits dans les produits crois\'{e}s.
\newblock {\em J. Funct. Anal.}, 32(3):381--392, 1979.

\bibitem{mygroupoidalgebra}
B.~Steinberg.
\newblock A groupoid approach to discrete inverse semigroup algebras.
\newblock {\em Adv. Math.}, 223(2):689--727, 2010.

\bibitem{groupoidbundles}
B.~Steinberg.
\newblock Modules over \'etale groupoid algebras as sheaves.
\newblock {\em J. Aust. Math. Soc.}, 97(3):418--429, 2014.

\bibitem{groupoidprimitive}
B.~Steinberg.
\newblock Simplicity, primitivity and semiprimitivity of \'etale groupoid
  algebras with applications to inverse semigroup algebras.
\newblock {\em J. Pure Appl. Algebra}, 220(3):1035--1054, 2016.

\bibitem{villamayor}
O.~E. Villamayor.
\newblock On weak dimension of algebras.
\newblock {\em Pacific J. Math.}, 9:941--951, 1959.

\end{thebibliography}

\end{document}